\newtheorem{theorem}{Theorem}[section]
\newtheorem{proposition}[theorem]{Proposition}
\newtheorem{corollary}[theorem]{Corollary}
\newtheorem{remark}[theorem]{Remark}
\newtheorem{definition}[theorem]{Definition}
\newtheorem{question}[theorem]{Question}
\def\be{\begin{equation}}
\def\ee{\end{equation}}
\def\br{\begin{eqnarray}}
\def\er{\end{eqnarray}}
 \title{Holomorphic Gromov's Partial Order}
 \author{Lingxu Meng}
\address{Department of Mathematics\\  North University of China \\ Taiyuan, Shanxi 030051,
P.R. China}
\email{20160012@nuc.edu.cn}
\thanks{}
\date{}
\begin{document}
\maketitle

\begin{abstract}
As in \cite{BBM}, we study holomorphic maps of positive degree between compact complex manifolds, and prove that any holomorphic map of degree one  from a compact complex manifold to itself is biholomorphic. This conclusion confirms  that under  a mild restriction the holomorphic Gromov relation ``$\geq$" is indeed a partial order.
\end{abstract}

\textbf{Keywords}: holomorphic map, degree, biholomorphic, Gromov's partial order.

\textbf{AMSC}: 32Q, 32H.

\section{Introduction}
In 1978, Gromov introduced a notion of \emph{domination} between smooth manifolds in a lecture at the Graduate Center CUNY  as follows:

Let $X$ and $Y$ be $n$-dimensional closed smooth manifolds. We say that \emph{$X\geq Y$} if there is a smooth map of positive degree from $X$ to $Y$.

Gromov asked whether the relation ``$\geq$" is a partial order
in the context of real manifolds of constant negative sectional curvature. Here, as in \cite{BBM}, we consider this problem for general complex manifolds. We introduce the following notions.

\begin{definition}\label{def1}
Let $X$ and $Y$ be connected compact complex manifolds of the same dimension. Then $X\geq_1 Y$ (resp. $X\geq Y$) means that from $X$ to $Y$ there exists a holomorphic map of degree one (resp. positive degree).
\end{definition}

 Then we  rephrase the question of Gromov's partial order as follows:

\begin{question}\label{qu}
Let $X$ and $Y$ be connected compact complex manifolds of  the same  dimension.
\begin{enumerate}[leftmargin=*]
\item[\textup{(a)}] If $X\geq_1 Y$ and $Y\geq_1 X$, are $X$ and $Y$ biholomorphic?
\item[\textup{(b)}] If $X\geq Y$ and $Y\geq X$, are $X$ and $Y$ biholomorphic?
\end{enumerate}
\end{question}

In this note,  we study holomorphic maps of degree one  between connected compact complex manifolds and obtain the following.

\begin{theorem}\label{1hol}
Let $X$ and $Y$ be connected compact complex manifolds of the same  dimension and with the same second Betti number. If $f:X\rightarrow Y$ is a holomorphic map of degree one, then $f$ is biholomorphic.
\end{theorem}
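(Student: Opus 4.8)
The plan is to recognize that a degree-one holomorphic map in this situation is forced to be a proper modification, and then to use the equality of second Betti numbers to rule out the existence of any exceptional divisor.

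First I would pin down the structure of $f$. Since $\dim_{\mathbb C}X=\dim_{\mathbb C}Y=n$ and $\deg f=1$, the map $f$ is generically finite, so by Remmert's proper mapping theorem its image is an $n$-dimensional irreducible analytic subset of $Y$; hence $f$ is surjective. Passing to the Stein factorization $X\to Z\to Y$, the first map has connected fibres and the second is finite, and since the degrees multiply to $1$ the finite map has degree $1$; as $Y$ is smooth, hence normal, that finite map is a biholomorphism, so $f$ itself has connected fibres. Put $B:=\{y\in Y:\dim f^{-1}(y)\ge 1\}$, a nowhere dense analytic subset by upper semicontinuity of fibre dimension. Over $Y\setminus B$ the map $f$ is finite with connected fibres, hence bijective, and of degree $1$ onto a normal manifold, hence biholomorphic. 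Thus $f$ is a proper modification, biholomorphic outside $B$, with exceptional locus $E:=\operatorname{Exc}(f)=f^{-1}(B)$. If $E=\varnothing$ then $f$ is a proper degree-one local biholomorphism of connected manifolds, hence a biholomorphism and we are done; so assume $E\neq\varnothing$ and seek a contradiction.

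By the purity theorem for exceptional loci of bimeromorphic morphisms between complex manifolds, $E$ has pure codimension one; write $E=E_1\cup\dots\cup E_m$ with the $E_i$ prime divisors. Each $E_i$ is contracted, for otherwise $E_i\to f(E_i)$ would be a generically finite, hence (by $\deg f=1$) generically biholomorphic, map onto a hypersurface of $Y$, contradicting $f(E_i)\subseteq B$; so $f_*E_i=0$ as a cycle. On the topological side, $X$ and $Y$ are compact and canonically oriented, so Poincaré duality furnishes a Gysin homomorphism $f_{!}\colon H^{\bullet}(X;\mathbb R)\to H^{\bullet}(Y;\mathbb R)$ with $f_{!}\circ f^{*}=(\deg f)\cdot\mathrm{id}=\mathrm{id}$. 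Hence $f^{*}$ is injective in every degree, and the hypothesis $b_2(X)=b_2(Y)$ upgrades this to an isomorphism $f^{*}\colon H^2(Y;\mathbb R)\xrightarrow{\ \sim\ }H^2(X;\mathbb R)$. Now the crux: choose $\gamma_i\in H^2(Y;\mathbb R)$ with $[E_i]=f^{*}\gamma_i$. If $C\subset X$ is any irreducible curve contracted by $f$, then $f_*[C]=0$, and the projection formula gives $E_i\cdot C=\int_C f^{*}\gamma_i=\int_{f_*[C]}\gamma_i=0$ for every $i$. Hence the nonzero effective $f$-exceptional divisor $D:=E_1+\dots+E_m$ satisfies $D\cdot C=0$ for every $f$-contracted curve $C$, so both $D$ and $-D$ are $f$-nef. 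The negativity lemma for the bimeromorphic morphism $f$, applied in turn with $D$ and with $-D$ (both having zero, hence effective, pushforward), then forces both $D$ and $-D$ to be effective, whence $D=0$, a contradiction. Therefore $f$ is a biholomorphism.

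I expect the main obstacles to be the two non-elementary inputs, invoked here for compact complex (possibly non-algebraic, non-Kähler) manifolds: the purity of the exceptional locus of a bimeromorphic morphism between manifolds, and the negativity lemma in the bimeromorphic category. Both reduce, by cutting with a generic $2$-dimensional polydisc section through a general point of $B$, to a proper modification $f|_{\widetilde\Pi}\colon\widetilde\Pi\to\Pi$ of smooth complex surfaces, where they follow from the Grauert–Mumford negative-definiteness of the intersection matrix of a contractible curve configuration. So a self-contained alternative is to carry out that reduction directly: produce a generic slice $\widetilde\Pi\subset X$ (smooth for $\Pi$ generic) on which $f$ restricts to a non-trivial surface modification, extract from the negative-definite configuration a contracted curve $C\subset\widetilde\Pi$ with $E\cdot C<0$ (using that $\mathcal O_X(E)|_{\widetilde\Pi}$ is the class of $\operatorname{Exc}(f|_{\widetilde\Pi})$ for generic $\Pi$), and contradict the identity $E\cdot C=0$ established above. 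Verifying these genericity statements carefully — that the slice meets $B$ in dimension zero near the chosen point, that its strict transform is smooth, and that the intersection-theoretic identifications hold — is the part that requires the most care.
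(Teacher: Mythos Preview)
Your argument is correct, but it diverges from the paper's at the crucial step. Both proofs first show that $f$ is a modification (you via Stein factorization and fibre dimension, the paper by directly counting preimages of regular values---either works), and both invoke purity of the exceptional locus. The difference is in extracting the contradiction from $b_2(X)=b_2(Y)$. You pull each class $[E_i]$ back through the isomorphism $f^{*}\colon H^2(Y;\mathbb R)\xrightarrow{\sim}H^2(X;\mathbb R)$, deduce $E_i\cdot C=0$ for every contracted curve $C$, and then invoke the negativity lemma for the bimeromorphic morphism $f$ to force $D=-D=0$. The paper instead proves a short exact sequence
\[
0\longrightarrow \bigoplus_{j=1}^{r}\mathbb R[E_j]\longrightarrow H_{2n-2}(X,\mathbb R)\xrightarrow{\ f_*\ } H_{2n-2}(Y,\mathbb R)\longrightarrow 0
\]
via the Borel--Moore long exact sequences of the pairs $(X,E)$ and $(Y,f(E))$, together with $H_{2n-2}(f(E),\mathbb R)=0$ (from $\operatorname{codim}_Y f(E)\ge 2$). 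This immediately yields $b_2(X)=b_2(Y)+r$, so $r=0$.

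The trade-off is clear. The paper's route is purely topological and entirely self-contained once purity is granted: no intersection theory, no slicing, and in particular no appeal to the negativity lemma in the analytic category. Your route is more geometric and perhaps more in the spirit of birational geometry, but---as you yourself flag---it leans on the analytic negativity lemma, whose justification in the non-K\"ahler, non-projective setting requires the local surface reduction you outline. That reduction is feasible (Grauert's negative-definiteness does the work on a generic two-dimensional slice), but carrying it out carefully is real work, whereas the Borel--Moore argument sidesteps it completely. If you want a maximally clean write-up, it is worth knowing that the exact sequence above gives the contradiction in one line.
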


Using this theorem, we can answer Question \ref{qu} partly.

\begin{theorem}\label{the2}
Let $X$ and $Y$ be connected compact complex manifolds of the same dimension.
\begin{enumerate}[leftmargin=*]
\item[\textup{(a)}] If $X\geq_1 Y$ and $Y\geq_1 X$, then $X$ and $Y$ are biholomorphic.
\item[\textup{(b)}]
 Suppose that $X\geq Y$ and $Y\geq X$. If $X$ and $Y$ are not biholomorphic, then $X$ and $Y$ both admit a holomorphic self-map of degree greater than one.
 \end{enumerate}
\end{theorem}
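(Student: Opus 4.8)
The plan is to reduce Theorem \ref{the2} to Theorem \ref{1hol} by composing the given maps and using that the (topological) degree is multiplicative under composition. For part (a), suppose $f\colon X\to Y$ and $g\colon Y\to X$ are holomorphic of degree one. Then $g\circ f\colon X\to X$ is holomorphic of degree $\deg(g)\deg(f)=1$, and since $X$ trivially has the same dimension and second Betti number as itself, Theorem \ref{1hol} shows $g\circ f$ is biholomorphic; likewise $f\circ g\colon Y\to Y$ is biholomorphic. To conclude that $f$ itself is biholomorphic, set $\ell:=(g\circ f)^{-1}\circ g$ and $r:=g\circ(f\circ g)^{-1}$, both holomorphic maps $Y\to X$. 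One checks $\ell\circ f=(g\circ f)^{-1}\circ(g\circ f)=\mathrm{id}_X$ and $f\circ r=(f\circ g)\circ(f\circ g)^{-1}=\mathrm{id}_Y$, so $f$ has both a holomorphic left inverse and a holomorphic right inverse; the identity $\ell=\ell\circ(f\circ r)=(\ell\circ f)\circ r=r$ forces these to agree, hence $f$ is bijective with holomorphic inverse $\ell=r$, i.e.\ biholomorphic.

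For part (b), suppose $f\colon X\to Y$ has degree $d_1>0$ and $g\colon Y\to X$ has degree $d_2>0$. Then $g\circ f$ is a holomorphic self-map of $X$ and $f\circ g$ a holomorphic self-map of $Y$, both of degree $d_1 d_2$. If $d_1 d_2=1$ then $d_1=d_2=1$, so $X\geq_1 Y$ and $Y\geq_1 X$, and part (a) makes $X$ and $Y$ biholomorphic, contrary to hypothesis. Therefore $d_1 d_2>1$, and $g\circ f$, $f\circ g$ are the desired holomorphic self-maps of degree greater than one.

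Since the substantive content has been absorbed into Theorem \ref{1hol}, there is essentially no obstacle left in this argument; the only point worth a word of care is the passage in part (a) from ``$g\circ f$ and $f\circ g$ biholomorphic'' to ``$f$ biholomorphic'', which the left/right inverse computation above settles explicitly, so that one need not appeal to the fact that a holomorphic bijection between complex manifolds is automatically biholomorphic.
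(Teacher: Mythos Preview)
Your proof is correct and follows essentially the same route as the paper: compose the two maps to obtain holomorphic self-maps and invoke Theorem~\ref{1hol} (equivalently, Corollary~\ref{cor self}(a)). In part~(a) the paper argues more tersely that $g\circ f$ biholomorphic forces $f$ injective, hence bijective, hence biholomorphic, while you spell out explicit left and right inverses; these are the same argument. In part~(b) there is a minor variation: the paper first observes that $f_*$ and $g_*$ are surjective on $H_2$, hence $b_2(X)=b_2(Y)$, and then applies Theorem~\ref{1hol} directly to $f$ or $g$ to conclude that in fact \emph{each} of $\deg f$ and $\deg g$ exceeds one; you instead reduce to part~(a) and conclude only that the product $d_1d_2>1$. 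Both arguments establish the stated theorem, though the paper's yields the slightly sharper intermediate conclusion.
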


Note that in \cite{BBM}  G. Bharali, I. Biswas and M. Mahan proved the above two theorems under the extra condition that $X$ or $Y$ belongs to the Fujiki class $\mathcal C$.
Recently, Theorem 1.3, which is used to prove Theorem 1.4, has independently been established by G. Bharali, I. Biswas and G. Schumacher in \cite{BBS}. The non-trivial part of the proof in \cite{BBS} rests upon a result in [4] by Bharali-Biswas. In contrast, the proof given here is entirely self-contained, and completely different from that of Bharali-Biswas-Schumacher.

The second part of Theorem \ref{the2} motivates one, from the viewpoint of Question 1.2, to consider those manifolds that do not admit a holomorphic self-map of degree greater than one.  In \cite{BBM}, G. Bharali, I. Biswas and M. Mahan obtained that the projective manifolds have this property if they belong to the following four classes: (i) projective manifolds of general type; (ii) Kobayashi hyperbolic manifolds; (iii) rational homogeneous projective manifolds with Picard number one which are not biholomorphic to $\mathbb{C}P^n$; or (iv) smooth projective hypersurfaces of dimension greater than one and of degree greater than two. We note that the measure hyperbolic manifolds, see section 2.4 in \cite{NO}, which are the generalization of Kobayashi hyperbolic manifolds, also admit this property. For more results on this question, we refer to \cite{AP, AKP, B, Fujimoto, HM}. Here, we consider a special class of compact complex manifolds, i.e, the Calabi-Yau manifolds. A \emph{Calabi-Yau manifold} $X$ in this note means a compact complex manifold $X$ with finite fundamental group and with $K_X^{\otimes m}=\mathcal{O}_X$ for some positive integer $m$. This is a broader definition, in one sense, than the one that the term "Calabi-Yau manifold" had in the early literature: Calabi-Yau manifolds, by our definition, need not be K\"ahlerian, but it is mildly restricted in another sense: that these manifolds must have finite fundamental group.

\begin{theorem}\label{cy}
If $X$ is a Calabi-Yau manifold, then every surjective holomorphic self-map $f:X\rightarrow X$ is biholomorphic.
\end{theorem}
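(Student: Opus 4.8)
The plan is to first show that $f$ is \emph{unramified}, using that a power of $K_X$ is trivial, and then that its degree equals $1$, using that $\pi_1(X)$ is finite. Since $X$ is a connected compact complex manifold of dimension $n$, a surjective holomorphic self-map $f:X\to X$ is a finite branched covering; let $d\geq 1$ be its degree. The differential $df$ yields, via $\wedge^{n}df$, a holomorphic bundle morphism $f^{*}K_{X}^{-1}\to K_{X}^{-1}$, equivalently a holomorphic section $J_f$ of $K_{X}\otimes f^{*}K_{X}^{-1}$ (the Jacobian of $f$). Because $f$ is surjective it is generically a local biholomorphism, so $J_f\not\equiv 0$; its zero divisor $R$ is effective and one obtains the Riemann--Hurwitz identity
\[
K_{X}\;\cong\; f^{*}K_{X}\otimes\mathcal{O}_{X}(R),\qquad R\ \text{effective}.
\]

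Now pick $m\geq 1$ with $K_{X}^{\otimes m}\cong\mathcal{O}_{X}$ and raise the identity to the $m$-th tensor power. Since $f^{*}\mathcal{O}_{X}\cong\mathcal{O}_{X}$, we get $f^{*}(K_{X}^{\otimes m})\cong\mathcal{O}_{X}$, hence
\[
\mathcal{O}_{X}\;\cong\;K_{X}^{\otimes m}\;\cong\;f^{*}(K_{X}^{\otimes m})\otimes\mathcal{O}_{X}(mR)\;\cong\;\mathcal{O}_{X}(mR).
\]
Thus $\mathcal{O}_{X}(mR)$ is holomorphically trivial while $mR$ is effective. On a connected compact complex manifold this forces $mR=0$: a trivialization of $\mathcal{O}_{X}(mR)$ turns the tautological section vanishing on $mR$ into a global holomorphic function on $X$, which is constant by compactness, hence nowhere zero, so the divisor is empty. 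Therefore $R=0$, i.e. $df$ is an isomorphism at every point and $f$ is an \emph{unramified} holomorphic covering map of degree $d$.

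Finally, an unramified covering $f:X\to X$ induces an injective homomorphism $f_{*}:\pi_{1}(X)\to\pi_{1}(X)$ whose image has index $d$ in $\pi_{1}(X)$. Since $\pi_{1}(X)$ is finite, the injective endomorphism $f_{*}$ is bijective, so $d=1$. (Equivalently: the universal cover $\widetilde{X}$ is compact because $\pi_{1}(X)$ is finite, $f$ lifts to a covering $\widetilde{X}\to\widetilde{X}$, which is trivial since $\widetilde{X}$ is simply connected, and comparing degrees gives $d=1$.) A degree-one unramified covering is bijective, hence a biholomorphism; thus $f$ is biholomorphic.

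The step needing the most care in this generality is the Riemann--Hurwitz identity: one must know that a surjective holomorphic self-map of a (possibly non-K\"ahler, non-projective) connected compact complex manifold is a finite branched covering and that its Jacobian section is not identically zero, so that the effective ramification divisor $R$ is well defined; once this is in hand, the rest is formal. It is worth noting that both hypotheses in the definition of a Calabi--Yau manifold are genuinely used here --- triviality of $K_{X}^{\otimes m}$ kills the ramification, and finiteness of $\pi_{1}(X)$ then kills the degree --- and that without the second hypothesis the statement is false, as the nontrivial isogenies of complex tori show.
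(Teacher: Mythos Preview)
Your proof is correct and follows essentially the same two-step strategy as the paper: first show $f$ is unramified via the Jacobian section and the torsion hypothesis on $K_X$ (this is the paper's Proposition~\ref{un-cov}\,(b)), then use finiteness of $\pi_1(X)$ to force degree one (the paper's Proposition~\ref{k-isom}\,(b), phrased there via the homotopy exact sequence rather than the index formula, but the content is the same).

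One clarification worth making: your opening assertion that a surjective holomorphic self-map of a compact complex manifold is automatically a \emph{finite} branched covering is neither proved nor needed, and you flag it yourself as the delicate point. In fact the paper avoids it entirely, and so does your own argument once read carefully. All you use is that $J_f\not\equiv 0$, and this follows directly from surjectivity plus equidimensionality: by Sard's theorem a regular value $y$ exists, $f^{-1}(y)$ is nonempty since $f$ is onto, and $df$ is invertible there. From $J_f\not\equiv 0$ the Riemann--Hurwitz identity holds, the torsion hypothesis kills $R$, and only \emph{then} do you know $f$ is a local biholomorphism; properness (compactness of $X$) then gives the covering property. So the finiteness is a consequence, not a prerequisite, and your worry in the final paragraph can be dropped.
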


 Combining the above discussions, we immediately get the following result.

\begin{corollary}
Let $X$ and $Y$ be connected compact complex manifolds of the same dimension. Suppose that $X\geq Y$ and $Y\geq X$. If $X$ or $Y$  is a measure hyperbolic manifold or a Calabi-Yau manifold, then $X$ and $Y$ are biholomorphic.
\end{corollary}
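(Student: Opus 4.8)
The plan is to argue by contradiction using Theorem~\ref{the2}(b) together with the fact that neither measure hyperbolic manifolds nor Calabi-Yau manifolds admit a holomorphic self-map of degree greater than one. Since the hypotheses $X\geq Y$ and $Y\geq X$ are symmetric in $X$ and $Y$, I would without loss of generality assume that $X$ is the manifold carrying the special structure, i.e.\ that $X$ is measure hyperbolic or Calabi-Yau.

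Suppose, for contradiction, that $X$ and $Y$ are not biholomorphic. Because $X\geq Y$ and $Y\geq X$, Theorem~\ref{the2}(b) gives that both $X$ and $Y$ admit a holomorphic self-map of degree greater than one; in particular $X$ carries a holomorphic self-map $f\colon X\to X$ with $\deg f>1$. The goal is then to show that the assumed structure on $X$ rules out such a map, producing the desired contradiction and forcing $X$ and $Y$ to be biholomorphic.

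If $X$ is Calabi-Yau, I would first observe that a holomorphic self-map of positive degree between compact complex manifolds of the same dimension is surjective: its image is a compact analytic subvariety, and a proper image of smaller dimension would force the topological degree to vanish. Hence $f$ is surjective, so Theorem~\ref{cy} applies and $f$ is biholomorphic, whence $\deg f=1$, contradicting $\deg f>1$. If instead $X$ is measure hyperbolic, I would invoke the property recorded in the introduction (see section~2.4 of \cite{NO}): the Eisenman-Kobayashi intrinsic measure $\Psi_X$ is a nondegenerate finite positive measure, and the measure-decreasing property $f^{*}\Psi_X\le\Psi_X$ combined with the degree formula $\int_X f^{*}\Psi_X=(\deg f)\int_X\Psi_X$ forces $\deg f\le 1$, again a contradiction. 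Either way the assumption that $X$ and $Y$ are not biholomorphic is untenable, so $X$ and $Y$ are biholomorphic.

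The genuinely substantive inputs are Theorems~\ref{the2} and~\ref{cy}, which have already been established; the corollary is their direct combination. The only point demanding care is the measure-hyperbolic case, where one must know that the intrinsic measure of a compact measure hyperbolic manifold is finite and strictly positive, so that the integral inequality is nontrivial---this is precisely what measure hyperbolicity provides. I expect no further obstacle.
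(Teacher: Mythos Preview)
Your proposal is correct and follows essentially the same route as the paper: the corollary is stated there without a written proof, merely as an immediate consequence of Theorem~\ref{the2}(b), Theorem~\ref{cy}, and the remark in the introduction that measure hyperbolic manifolds admit no holomorphic self-map of degree greater than one. Your write-up simply makes explicit the contradiction argument and the surjectivity observation (cf.\ Remark~\ref{rem1}) that the paper leaves implicit.
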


The paper is organized as follows.
 Theorem \ref{1hol} and \ref{the2} will be proved in Section 2, and Theorem \ref{cy} will be proved in Section 3.

\section{holomorphic maps of degree one}
In this paper, we need consider degrees of holomorphic maps. First, let us recall the definition of the degree of a smooth map. We often use two equivalent definitions of the degree of a smooth map between oriented smooth manifolds (resp. cf. \cite{BT} and \cite{M}). But we only use the following definition here.
\begin{definition}[\cite{M}]
Let $M$ and $N$ be $n$-dimensional compact oriented smooth manifolds, and let $f:M\rightarrow N$ be a smooth map. If $N$ is connected, then the \emph{degree} of $f$ is defined as
\begin{displaymath}
\deg f:=\sum_{x\in f^{-1}(y)}\textup{sgn}(f)(x),
\end{displaymath}
where $y\in N$ is a regular value and $\textup{sgn} (f)(x)$ is the sign of the Jacobi $J(f)(x)$ of $f$ at $x$.
\end{definition}

The above definition does not depend on the regular value $y$.

\begin{remark}\label{rem0}
If $f:X\rightarrow Y$ is a holomorphic map between $n$-dimensional complex manifolds $X$ and $Y$, then $f$ has a real Jocobi $J_{\mathbb{R}}(f)(x)$, which is a real determinant of order $2n$, and a complex Jacobi $J_{\mathbb{C}}(f)(x)$, which is a complex determinant of order $n$. In the definition of degree of $f$, $f$ is regarded as a smooth map between $2n$-dimensional smooth manifolds $X$ and $Y$, so $\textup{sgn}(f)(x)$ refer to the sign of the real Jacobi $J_{\mathbb{R}}(f)(x)$ in this situation.
\end{remark}

\begin{remark}\label{rem1}
If $f:X\rightarrow Y$ is a holomorphic map between $X$ and $Y$ which are both $n$-dimensional connected compact complex manifolds, then $f$ is surjective if and only if $\deg f\neq 0$. Moreover, in this case, we have $\deg f> 0$. Indeed, suppose that $f$ is surjective. Let $S'$ be the set of its critical values. Then $Y-S'$ is the set of its regular values. For any $y\in Y- S'$ and $x\in f^{-1}(y)$, we have $\textup{sgn} (f)(x)=1$ as the real Jacobi $J_{\mathbb{R}}(f)(x)$ is nonnegative. Therefore, $\deg f>0$.
The other direction is obvious by the definition of  degree.
\end{remark}

 In this section, we consider holomorphic maps of degree one. We first recall the definition of  a modification. A holomorphic map $f:X\rightarrow Y$ between connected compact complex manifolds is called a \emph{modification}, if there is a nowhere dense analytic subset $F\subset Y$, such that $f^{-1}(F)\subset X$ is nowhere dense and $f:X-f^{-1}(F)\rightarrow Y-F$ is biholomorphic. If $F$ is the minimal analytic subset satisfying the above condition, then we call $E=f^{-1}(F)$ the \emph{exceptional set} of the modification $f$. We need the following basic result on the modifications.

\begin{theorem}[\cite{GR}, page 215 and \cite{Fis}, page 170]\label{mod}
If $f:X\rightarrow Y$ is a modification between connected compact complex manifolds and the exceptional set $E$ is not empty, then $E$ has pure codimension one in $X$ and $\textup{codim}_Yf(E)\geq 2$.
\end{theorem}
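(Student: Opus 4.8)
The plan is to read off both assertions from the fiber structure of $f$, the single crucial input being that over $F:=f(E)$ every fiber of $f$ is connected and of positive dimension. Since $X$ is compact, $f$ is proper; because $X-E\cong Y-F$ is dense and $f(X)$ is closed, $f$ is surjective, and by Remmert's proper mapping theorem $F=f(E)$ is a nowhere dense analytic subset of $Y$, with holomorphic inverse $g:=(f|_{X-E})^{-1}:Y-F\rightarrow X-E$. I would first isolate the central lemma: for every $y\in F$ the fiber $f^{-1}(y)$ is connected and satisfies $\dim f^{-1}(y)\geq 1$. Connectedness comes from the analytic Zariski Main Theorem, i.e. from the Stein factorization of $f$ together with the normality of the smooth target $Y$, which forces the finite factor to be biholomorphic. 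Positive-dimensionality then follows from the minimality of $F$: if some $f^{-1}(y)$ were zero-dimensional, it would (being connected) be a single point, so $f$ would be finite and bimeromorphic near it onto a neighborhood of $y$ in the normal space $Y$, hence biholomorphic there (a finite bimeromorphic map onto a normal complex space is biholomorphic), placing $y$ in the biholomorphic locus and contradicting $y\in F$. This lemma is the main obstacle of the argument, since it is exactly where normality and the Main-Theorem/extension machinery must be invoked.

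Granting the lemma, the bound $\textup{codim}_Y f(E)\geq 2$ is a dimension count. Let $F_0$ be an irreducible component of $F$ of maximal dimension $d:=\dim F$. For generic $y\in F_0$ one has $\dim f^{-1}(y)\geq 1$, so the locus $\{x\in E:\dim_x f^{-1}(f(x))\geq 1\}$, which is analytic by upper semicontinuity of fiber dimension, admits an irreducible component $E_1$ dominating $F_0$ whose generic fiber over $F_0$ has dimension $\geq 1$; hence $\dim E_1\geq d+1$. But $E_1\subseteq f^{-1}(F)=E$ is nowhere dense in the $n$-dimensional manifold $X$, so $\dim E_1\leq n-1$, forcing $d\leq n-2$. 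As this holds for the maximal component, $\dim F\leq n-2$, that is $\textup{codim}_Y f(E)\geq 2$.

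For the purity of $E$ I would use the complex Jacobian. The differential $df:T_X\rightarrow f^{*}T_Y$ has determinant a global holomorphic section $J$ of $K_X\otimes f^{*}K_Y^{-1}$, and $J\not\equiv 0$ since $f$ is a local biholomorphism on the dense open set $X-E$; thus its zero set $\{J=0\}$ is either empty or of pure codimension one. On $X-E$ the map $f$ is biholomorphic, so $J$ vanishes nowhere there, giving $\{J=0\}\subseteq E$. For the reverse inclusion, take $x\in E$ and put $y=f(x)\in F$: were $J(x)\neq 0$, then $f$ would be a local biholomorphism at $x$, making $x$ an isolated point of $f^{-1}(y)$; but by the lemma $f^{-1}(y)$ is connected and of dimension $\geq 1$, hence has no isolated point, a contradiction. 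Therefore $J(x)=0$, so $E\subseteq\{J=0\}$ and consequently $E=\{J=0\}$. Being nonempty by hypothesis, $E$ is then of pure codimension one in $X$, which together with the previous paragraph completes the proof.
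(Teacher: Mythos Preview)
The paper does not prove this theorem; it is quoted from Grauert--Remmert and Fischer as a known structural fact about modifications and then used as a black box. Your argument is correct and supplies a self-contained proof: connectedness of all fibers via Stein factorization and normality of the smooth target $Y$, positive fiber dimension over the minimal center $F$ by the finite-bimeromorphic-onto-normal criterion, and the identification $E=\{J_{\mathbb C}(f)=0\}$ as the zero divisor of a global section of $K_X\otimes f^{*}K_Y^{-1}$ are exactly the standard ingredients, and you combine them correctly.

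Two minor remarks. First, once your lemma is in hand the locus $\{x\in E:\dim_x f^{-1}(f(x))\geq 1\}$ is all of $E$, so the codimension estimate simplifies: $f|_E:E\to F$ is a proper surjection with every fiber of dimension $\geq 1$, hence $\dim E\geq\dim F+1$, while $\dim E\leq n-1$ because $E$ is nowhere dense; thus $\dim F\leq n-2$. Second, the step ``contradicting $y\in F$'' in your positive-dimensionality argument tacitly uses that the minimal analytic $F$ coincides with the complement of the biholomorphic locus; this is true, and follows from the same circle of ideas (show that $f(S)$ with $S=\{J_{\mathbb C}(f)=0\}$ is already an admissible analytic center, forcing $F\subseteq f(S)$, while $S\subseteq E$ gives $f(S)\subseteq F$), but it is worth making explicit since minimality among \emph{analytic} subsets does not a priori forbid $F$ from meeting the biholomorphic locus.
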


Recall a proposition of A. Fujiki in \cite{Fuj} about modifications, whose original proof uses the method of local cohomology. For a convenience, we give a simpler proof as in \cite{FMX}.

\begin{proposition}[\cite{Fuj}, Proposition 1.1]\label {thm Fuj}
If $f:X\rightarrow Y$ is a modification of $n$-dimensional compact complex manifolds with exceptional set $E$, then there is an exact sequence
\begin{equation}\label{ses}
\xymatrix{
0\ar[r] &H_{2n-2}(E, \mathbb{R})\ar[r]^{i_*} &H_{2n-2}(X, \mathbb{R})\ar[r]^{f_*} &H_{2n-2}(Y, \mathbb{R})\ar[r]&0
},
\end{equation}
where $i: E\rightarrow X$ is the inclusion. Moreover, if $E_1,\dots,E_r$ are the irreducible components of $E$, then  $H_{2n-2}(E, \mathbb{R})=\oplus^r_{j=1} \mathbb{R}[E_j]$, where $[E_j]\in H_{2n-2}(E, \mathbb{R})$ is the fundamental class of $E_j$ in $E$ for $j=1,\dots, r$.
\end{proposition}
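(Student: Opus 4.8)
The plan is to compare, with $\mathbb{R}$-coefficients, the long exact homology sequences of the pairs $(X,E)$ and $(Y,F)$, where $F:=f(E)$, using two inputs: the codimension estimates of Theorem \ref{mod}, and the fact that a holomorphic modification has degree one. (If $E=\varnothing$ then $f$ is biholomorphic and both assertions are trivial, so assume $E\neq\varnothing$.)

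First I would record the degree argument: since $f$ is a biholomorphism over a dense open subset of $Y$, its generic fibre is a single point, whence $\deg f=1$ (cf.\ Remark \ref{rem1}). Writing $[X],[Y]$ for the fundamental classes, the projection formula then gives $f_*(f^*\alpha\cap[X])=\alpha\cap f_*[X]=\alpha\cap[Y]$ for every $\alpha\in H^*(Y;\mathbb{R})$, and since capping with $[Y]$ is an isomorphism (Poincar\'e duality on $Y$), it follows that $f_*\colon H_k(X;\mathbb{R})\to H_k(Y;\mathbb{R})$ is surjective for all $k$. Second, by Theorem \ref{mod}, $E$ has pure real dimension $2n-2$ and $F=f(E)$ has real dimension $\le 2n-4$; moreover $f^{-1}(F)=E$, because $E$ is by definition the preimage of the minimal analytic subset $\widetilde F\subseteq Y$ with the modification property and $f(E)=\widetilde F$ by surjectivity of $f$. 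From this, $H_j(E;\mathbb{R})=0$ for $j\ge 2n-1$, and the long exact sequence of $(Y,F)$ shows that $H_j(Y;\mathbb{R})\to H_j(Y,F;\mathbb{R})$ is an isomorphism for $j\ge 2n-2$.

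The third ingredient is that $f_*\colon H_j(X,E;\mathbb{R})\to H_j(Y,F;\mathbb{R})$ is an isomorphism for all $j$. To see it, note that $f$ induces a continuous bijection $X/E\to Y/F$ (it is injective off $E$, maps $E$ onto $F$, and $f^{-1}(F)=E$), which is a homeomorphism because $X/E$ is compact and $Y/F$ is Hausdorff; since a closed analytic subset of a complex manifold is a neighbourhood deformation retract, $(X,E)$ and $(Y,F)$ are good pairs, so $H_j(X,E)\cong\widetilde H_j(X/E)\cong\widetilde H_j(Y/F)\cong H_j(Y,F)$ via $f_*$, compatibly with the maps out of $H_j(X)$ and $H_j(Y)$. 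Now I would read off the segment
$$
H_{2n-1}(X)\xrightarrow{\,a\,}H_{2n-1}(X,E)\xrightarrow{\,\partial\,}H_{2n-2}(E)\xrightarrow{\,i_*\,}H_{2n-2}(X)\xrightarrow{\,b\,}H_{2n-2}(X,E)
$$
of the long exact sequence of $(X,E)$: after the identifications above, $a$ becomes the surjection $f_*\colon H_{2n-1}(X)\to H_{2n-1}(Y)$, so $\partial=0$ and hence $i_*$ is injective; and $b$ becomes $f_*\colon H_{2n-2}(X)\to H_{2n-2}(Y)$, which is surjective with kernel $\operatorname{im}i_*$. This is exactly the short exact sequence \eqref{ses}. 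For the final statement, $E$ is a compact complex space of pure complex dimension $n-1$, and it is standard that $H_{2n-2}(E;\mathbb{R})$ is then the free $\mathbb{R}$-vector space on the fundamental classes $[E_1],\dots,[E_r]$ of the irreducible components of $E$ — e.g.\ because $E_{\mathrm{sing}}$ has real dimension $\le 2n-4$, so that $H_{2n-2}(E;\mathbb{R})\cong H_{2n-2}(E,E_{\mathrm{sing}};\mathbb{R})$ is the top Borel--Moore homology of the complex manifold $E_{\mathrm{reg}}$, whose connected components are in bijection with the $E_j$.

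The step I expect to be the main obstacle is this isomorphism $H_j(X,E)\cong H_j(Y,F)$: one must be careful that $f$ really induces a homeomorphism of the quotient spaces and that the analytic pairs involved are good pairs (or replace this by a direct excision argument). Once that and the degree-one surjectivity of $f_*$ on homology are established, the rest is a short diagram chase.
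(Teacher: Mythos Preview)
Your argument is correct and follows essentially the same route as the paper: establish surjectivity of $f_*$ on all homology via $\deg f=1$ and the projection formula, compare the two long exact sequences using that $\mathrm{codim}_Y F\ge 2$ kills $H_{2n-2}(F)$, and chase the diagram. The only cosmetic difference is the packaging: the paper works with the Borel--Moore long exact sequences of $(X,U)$ and $(Y,V)$ (so the identification $H^{BM}_*(U)\cong H^{BM}_*(V)$ is immediate from $f|_U$ being biholomorphic), whereas you use relative singular homology of $(X,E)$ and $(Y,F)$ together with the good-pair/quotient identification $X/E\cong Y/F$; the resulting diagrams and chases are the same, and your treatment of $H_{2n-2}(E)$ via $E_{\mathrm{sing}}$ is likewise a minor variant of the paper's use of $A=\bigcup_{i\neq j}(E_i\cap E_j)$.
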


\begin{proof}
By the projection formula
\begin{displaymath}
f_*f^*= \textup{id}:H_r(Y, \mathbb{R})\rightarrow H_r(Y, \mathbb{R}),
\end{displaymath}
$f_*: H_r(X, \mathbb{R})\rightarrow H_r(Y, \mathbb{R})$ is surjective for $r=0,\dots, 2n$. Here $f^*:H_r(Y, \mathbb{R})\rightarrow H_r(X, \mathbb{R})$ is induced by the pull back as follows:
\begin{equation*}
\xymatrix{
H_r(Y, \mathbb{R}) \ar[d]_{PD_Y} \ar[r]^-{f^*}  & H_r(X, \mathbb{R}) \ar[d]^{PD_X}  \\
  H^{2n-r}(Y, \mathbb{R}) \ar[r]_{f^*}  & H^{2n-r}(X, \mathbb{R})},
\end{equation*}
where $PD_X$ (resp. $PD_Y$) is the Poincar\'e duality of $X$ (resp. $Y$).

Let $F= f(E)$, $U= X-E$, and $V= Y- F$. Then $f\mid_U: U\rightarrow V$ is a biholomorphic map. Consider the following commutative diagram of exact sequences of Borel-Moore homology
\begin{displaymath}
\xymatrix{
H_{2n-1}(X, \mathbb{R})\ar[d]^{f_*} \ar[r]& H^{BM}_{2n-1}(U, \mathbb{R}) \ar[d]^{\cong} \ar[r]& H_{2n-2}(E, \mathbb{R}) \ar[d]\ar[r]^{i_*}& H_{2n-2}(X, \mathbb{R})\ar[d]^{f_*}\ar[r]&H^{BM}_{2n-2}(U, \mathbb{R})\ar[d]^{\cong}\\
H_{2n-1}(Y, \mathbb{R})       \ar[r]& H^{BM}_{2n-1}(V, \mathbb{R})     \ar[r] & H_{2n-2}(F, \mathbb{R})     \ar[r]& H_{2n-2}(Y, \mathbb{R})         \ar[r]&H^{BM}_{2n-2}(V, \mathbb{R}) }.
\end{displaymath}
By Theorem \ref{mod}, codim$_Y F\geq 2$, so $H_{2n-2}(F, \mathbb{R})= 0$. By the second long exact sequence, we know that $H_{2n-1}(Y, \mathbb{R})\rightarrow H^{BM}_{2n-1}(V, \mathbb{R})$ is surjective. Since $f_*: H_{2n-1}(X, \mathbb{R})\rightarrow H_{2n-1}(Y, \mathbb{R})$ is surjective, $H_{2n-1}(X, \mathbb{R})\rightarrow H^{BM}_{2n-1}(U, \mathbb{R})$ is surjective. Hence $i_*$ is injective by the first long exact sequence.

If $\alpha\in H_{2n-2}(X, \mathbb{R})$ and $f_*(\alpha)=0$, then the image of $\alpha$ in $H^{BM}_{2n-2}(U, \mathbb{R})\cong H^{BM}_{2n-2}(V, \mathbb{R})$ is zero. Hence, $\alpha$ is in the image of $i_*$ by the first long exact sequence. Thus, $\textup{Ker} f_*\subseteq \textup {Im} i_*$. From the fact $H_{2n-2}(F, \mathbb{R})= 0$, we also have $f_*i_*=0$, i.e., $\textup{Im} i_*\subseteq \textup{Ker} f_*$. Therefore  $\textup{Im} i_*=\textup{Ker} f_*$. Combining the above discussions, We get the short exact sequence (\ref{ses}).

By Theorem \ref{mod}, $E_1,\dots,E_r$ all have dimension $(n-1)$. Set
\begin{displaymath}
A:=\cup_{i\neq j}(E_i\cap E_j),
\end{displaymath}
\begin{displaymath}
E'_i:=E_i-A\cap E_i.
\end{displaymath}
Then, all $E'_i$ for $i=1,\cdots,r$ do not  intersect with one another and $E-A=\cup_iE'_i$.
We consider the exact sequence of Borel-Moore homology for $(E, A)$
\begin{displaymath}
\xymatrix{
H_{2n-2}(A, \mathbb{R})\ar[r] &H_{2n-2}(E, \mathbb{R})\ar[r] &H^{BM}_{2n-2}(E-A, \mathbb{R})\ar[r] &H_{2n-3}(A, \mathbb{R}).
}
\end{displaymath}
By the definition of $A$, $\dim A\leq n-2$, so $H_{2n-2}(A, \mathbb{R})=H_{2n-3}(A, \mathbb{R})=0$. Hence
\begin{displaymath}
H_{2n-2}(E, \mathbb{R})= H^{BM}_{2n-2}(E-A, \mathbb{R})= \oplus_i H^{BM}_{2n-2}(E'_i, \mathbb{R}).
\end{displaymath}
Considering the long exact sequence of Borel-Moore homology for $(E_i, A\cap E_i)$, we obtain
\begin{displaymath}
H^{BM}_{2n-2}(E'_i, \mathbb{R})= H_{2n-2}(E_i, \mathbb{R})= \mathbb{R}[E_i].
\end{displaymath}
Hence, $H_{2n-2}(E, \mathbb{R})= \oplus_i \mathbb{R}[E_i]$.
\end{proof}

Having made the above preparations,  we can prove Theorem \ref{1hol}.
\begin{proof}
Denote $J_{\mathbb{C}}(f)(x)$ the complex Jacobi of $f$ at $x$. Define
\begin{displaymath}
 S:=\{x\in X\mid J_{\mathbb{C}}(f)(x)=0\},
\end{displaymath}
and $S'=f(S)$. That is,  $S$ is the set of critical points, and $S'$ is the set of critical values. Then  $Y-S'$ is the set of regular values.

By Remark \ref{rem1}, $f:X\to Y$ is surjective and for any $y\in Y-S'$ and $x\in f^{-1}(y)$,  $\textup{sgn}(f)(x)=1$. Since
\begin{displaymath}
\sum_{x\in f^{-1}(y)}\textup{sgn}(f)(x)= \deg f = 1,
\end{displaymath}
$f^{-1}(y)$ contains only one point. Hence, $f:X-f^{-1}(S')\rightarrow Y-S'$ is injective. Therefore it is biholomorphic.

By \cite{U}, Corollary 1.7, $S$ is a nowhere dense analytic subset of $X$. Then by the proper mapping theorem, $S'=f(S)$ is an analytic subset. Since
$$\dim S' =\dim f(S)\leq\dim S<\dim X=\dim Y,$$
 $S'$ is also nowhere dense in $Y$. We claim that the analytic subset $f^{-1}(S')$ is also nowhere dense in $X$. Indeed, if $f^{-1}(S')$ is dense in $X$, then $f^{-1}(S')=X$ since $X$ is connected. Hence, $S'=f(f^{-1}(S'))=f(X)=Y$, which contradicts that $S'$ is nowhere dense in $Y$. Therefore, $f$ is a modification.

Suppose the exceptional set $E\subseteq X$ of $f$ is not empty. By Theorem \ref{mod}, $E$ has pure codimension one, $\textup{codim}_Y f(E)\geq 2$ and $f:X-E\rightarrow Y-f(E)$ is biholomorphic. Assume that $r$ is the number of irreducible components of $E$. By Proposition \ref{thm Fuj}, $b_2(X)=b_2(Y)+r$. Then by the hypothesis of theorem,   $r=0$, i.e., $E= \emptyset$, which contradicts to the previous assumption. Therefore, $f$ is a biholomorphic map.
\end{proof}

\begin{corollary}\label {cor self}
\begin{enumerate}[leftmargin=*]
\item[\textup{(a)}] Any holomorphic self-map of degree one of a connected compact complex manifold must be biholomorphic.
\item [\textup{(b)}] Let $f:X\rightarrow Y$ be a holomorphic map of degree one. If $X$ and $Y$ are both $K3$ surfaces, Enriques surfaces or complex tori of the same dimension, then $f$ is biholomorphic.
    \end{enumerate}
\end{corollary}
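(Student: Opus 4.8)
The plan is to obtain both statements as immediate corollaries of Theorem \ref{1hol}; all that needs checking is that its hypotheses on dimension and second Betti number hold in each case.

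For part (a) there is nothing to verify: taking $Y=X$, a connected compact complex manifold has, trivially, the same dimension and the same second Betti number as itself, a degree-one self-map is in particular surjective by Remark \ref{rem1}, and Theorem \ref{1hol} then gives that $f$ is biholomorphic. For part (b) I would invoke the classical topology of the three families. Every $K3$ surface is diffeomorphic to a fixed $4$-manifold, with $b_1=0$ and $b_2=22$; every Enriques surface is diffeomorphic to a fixed $4$-manifold, with $b_1=0$ and $b_2=10$; and every $n$-dimensional complex torus is diffeomorphic to $(S^1)^{2n}$, so that $b_2=\binom{2n}{2}$. Consequently, if $X$ and $Y$ both belong to one of these three classes (and, in the torus case, have the same complex dimension), then $X$ and $Y$ are connected compact complex manifolds of the same dimension with $b_2(X)=b_2(Y)$. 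Since $\deg f=1$ by hypothesis, Theorem \ref{1hol} applies and $f$ is biholomorphic.

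I do not expect any real obstacle here: the substantive work is entirely contained in Theorem \ref{1hol}, which is already established, and the rest is bookkeeping. The one point that is not a tautology is the constancy of $b_2$ within each of the three families; this is classical and can simply be cited (for instance from Barth--Hulek--Peters--Van de Ven for the two surface cases, the torus case being an elementary computation). If one prefers a self-contained derivation of the surface values, $b_2=22$ and $b_2=10$ follow from Noether's formula $12\,\chi(\mathcal{O})=K^2+c_2$ together with $K^2=0$ and the standard facts that $\chi(\mathcal{O})=2$ for a $K3$ surface and $\chi(\mathcal{O})=1$ for an Enriques surface, but this refinement is not needed for the argument.
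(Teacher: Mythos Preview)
Your proposal is correct and follows exactly the paper's approach: both parts are obtained immediately from Theorem~\ref{1hol} once one notes that $b_2(X)=b_2(Y)$ in each case. Your write-up simply supplies more detail (the explicit values of $b_2$ and how to compute them) than the paper's one-line proof, but the argument is the same.
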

\begin{proof}
We get part (a) and (b) immediately by Theorem \ref{1hol} if we note that in all cases $b_2(X)=b_2(Y)$.
\end{proof}

\begin{remark}
We can also obtain Corollary \ref{cor self}, (b) by Proposition \ref{un-cov}, (b) immediately.
\end{remark}

Now, we prove Theorem \ref{the2} as follows.

\begin{proof}
(a) If $f:X\rightarrow Y$ and $g:Y\rightarrow X$ are both holomorphic maps of degree one, then the composition $g\circ f:X\rightarrow X$ is a holomorphic self-map of degree one. By Corollary \ref{cor self}, (a), $g\circ f$ is biholomorphic. Hence, $f$ is injective. Since $f$ is also surjective, it is bijective.  Therefore, $f$ is a biholomorphic map.

(b) By Definition \ref{def1}, there exist positive degree holomorphic maps $f:X\to Y$ and $g:Y\to X$. Hence the push out $f_\ast:H_2(X, \mathbb{R})\rightarrow H_2(Y, \mathbb{R})$  and $g_\ast:H_2(Y,\mathbb R)\to H_2(X,\mathbb R)$ are surjective. Thus, $b_2(X)= b_2(Y)$. Now we have the conclusions that $\deg f>1$ and $\deg g>1$. Otherwise by Theorem \ref{1hol}, $f$ or $g$ is biholomorphic, which contradicts the hypothesis of theorem. Hence, $\deg(f\circ g)>1$ and $\deg(g\circ f)>1$.
\end{proof}



Next we give another sufficient condition for a degree-one holomorphic map to be  biholomorphic.  We first recall some notations.  For a compact complex manifold $X$, define its \emph{Neron-Severi group}
$$NS(X)=\textup{Im} (c_1:\textup{Pic}(X)\rightarrow H^2(X, \mathbb{Z})).$$ Denote $NS(X)_{\mathbb{R}}:=NS(X)\otimes_{\mathbb{Z}}{\mathbb{R}}$ and $\rho(X):=\dim_{\mathbb{R}}NS(X)_{\mathbb{R}}$. The number $\rho(X)$ is called the {\sl Picard number} of $X$.

\begin{proposition}\label {NSFuj}
Let $X$ be a connected projective manifold and $Y$ a connected compact complex manifold. If $f:X\rightarrow Y$ is a modification and $E_1,\dots,E_r$ are the irreducible components of its exceptional set $E$, then there is an exact sequence
\begin{equation}\label{ex2}
\xymatrix{
0\ar[r] &\oplus^r_{j=1} \mathbb{R}[E_j]\ar[r]^{i_\ast} &NS(X)_{\mathbb{R}}\ar[r]^{f_*} &NS(Y)_{\mathbb{R}}\ar[r]&0
},
\end{equation}
where $i: E\rightarrow X$ is the inclusion and $[E_j]$ is the fundamental class of $E_j$ in $X$ for $j=1,\dots, r$.
\end{proposition}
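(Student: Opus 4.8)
The plan is to obtain the exact sequence (\ref{ex2}) by restricting the exact sequence (\ref{ses}) of Proposition \ref{thm Fuj} to the Néron--Severi subspaces. Throughout I identify $NS(Z)_{\mathbb{R}}$ with its image in $H_{2n-2}(Z,\mathbb{R})$ under the composite $NS(Z)_{\mathbb{R}}\hookrightarrow H^2(Z,\mathbb{R})\xrightarrow{PD_Z}H_{2n-2}(Z,\mathbb{R})$ for $Z=X,Y$; under this identification the map $f_*$ in (\ref{ex2}) is the restriction of the homology push-forward appearing in (\ref{ses}), and the commutative square from the proof of Proposition \ref{thm Fuj} identifies the homology $f^*$ with the cohomology $f^*$. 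Thus the statement reduces to three assertions: (1) each $[E_j]$, hence $\textup{Im}\,i_*$, lies in $NS(X)_{\mathbb{R}}$; (2) $f_*$ carries $NS(X)_{\mathbb{R}}$ into $NS(Y)_{\mathbb{R}}$ and does so surjectively; (3) $\textup{Ker}(f_*|_{NS(X)_{\mathbb{R}}})=\textup{Im}\,i_*$.

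For (1): since $X$ is projective, every line bundle on $X$ has the form $\mathcal{O}_X(D)$ with $D$ a $\mathbb{Z}$-combination of irreducible hypersurfaces (write $D$ as a difference of very ample divisors by Serre's theorem), so $NS(X)_{\mathbb{R}}$ is the $\mathbb{R}$-span of classes of irreducible hypersurfaces of $X$. By Theorem \ref{mod} the components $E_1,\dots,E_r$ are irreducible hypersurfaces, hence $[E_j]=c_1(\mathcal{O}_X(E_j))\in NS(X)_{\mathbb{R}}$; and $i_*$ is injective with the $[E_j]$ independent, because this already holds in (\ref{ses}).

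For (2) it suffices to evaluate $f_*$ on the spanning set of classes $[D]$ of irreducible hypersurfaces $D\subseteq X$. If $D$ is not one of the $E_j$, then $D\setminus E$ is dense in $D$ and $f$ maps it biholomorphically onto a dense open subset of the analytic set $f(D)$, which is therefore an irreducible hypersurface of $Y$ and $f_*[D]=[f(D)]=c_1(\mathcal{O}_Y(f(D)))\in NS(Y)_{\mathbb{R}}$; if $D=E_j$, then $\dim f(E_j)\le n-2$ by Theorem \ref{mod}, so $f_*[E_j]=0$. For surjectivity, given $L\in\textup{Pic}(Y)$ one has $f^*c_1(L)=c_1(f^*L)\in NS(X)_{\mathbb{R}}$, and the projection formula $f_*f^*=\textup{id}$ on $H_{2n-2}(Y,\mathbb{R})$ (used already in the proof of Proposition \ref{thm Fuj}, valid since a modification has degree one) shows that $f_*$ sends $c_1(f^*L)$ to $c_1(L)$.

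For (3): $f_*i_*=0$ already in (\ref{ses}); and if $\alpha\in NS(X)_{\mathbb{R}}$ with $f_*\alpha=0$, then $\alpha\in\textup{Ker}\bigl(f_*\colon H_{2n-2}(X,\mathbb{R})\to H_{2n-2}(Y,\mathbb{R})\bigr)=\textup{Im}\,i_*=\oplus_j\mathbb{R}[E_j]$ by (\ref{ses}), which is exactly $\textup{Im}\,i_*$ in (\ref{ex2}). The whole argument is formal once (\ref{ses}) is in hand, so the only real content — and the place where the hypotheses genuinely enter — is assertion (2): one must know that the push-forward of a non-contracted prime divisor is represented by an honest holomorphic line bundle on the possibly non-K\"ahler manifold $Y$ (this forces the use of the modification structure, Theorem \ref{mod}), and that $NS(X)_{\mathbb{R}}$ is spanned by prime-divisor classes in the first place (this is where projectivity of $X$ is used). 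I expect verifying these two points carefully to be the main, and essentially the only, obstacle.
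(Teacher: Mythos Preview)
Your proposal is correct and follows essentially the same approach as the paper: both arguments restrict the homology exact sequence of Proposition~\ref{thm Fuj} to the N\'eron--Severi subspaces, using projectivity of $X$ to write every line bundle as $\mathcal{O}_X(D)$ (so that $f_*c_1(L)=c_1(\mathcal{O}_Y(f_*D))\in NS(Y)_{\mathbb{R}}$) and the projection formula $f_*f^*=\mathrm{id}$ for surjectivity, after which exactness at $NS(X)_{\mathbb{R}}$ and injectivity of $i_*$ are inherited from (\ref{ses}). The paper's proof is terser---it packages your case analysis on prime divisors into the single formula $f_*(c_1(\mathcal{O}_X(D)))=c_1(\mathcal{O}_Y(f_*D))$---but the content is the same.
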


\begin{proof}
For any $L\in \textup{Pic}(X)$, there is a divisor $D$ on $X$ such that $L=\mathcal{O}(D)$ and hence, $f_*(c_1(L))=c_1(\mathcal{O}(f_*(D)))$. Therefore, $f_*(NS(X)_{\mathbb{R}})\subseteq NS(Y)_{\mathbb{R}}$. On the other hand, since $f^*(NS(Y)_{\mathbb{R}})$ $\subseteq NS(X)_{\mathbb{R}}$, by the projection formula $f_*f^*=\textup{id}$, we have
$$NS(Y)_{\mathbb{R}}=f_*f^*(NS(Y)_{\mathbb{R}})\subseteq f_*(NS(X)_{\mathbb{R}}).$$
 Hence, $f_*$ is surjective.  Then since $[E_i]\in NS(X)_{\mathbb{R}}$, sequence (\ref{ex2}) is exact subsequently by Proposition \ref{thm Fuj}.
\end{proof}
\begin{theorem}
Let $X$ be a connected projective manifold and $Y$ a connected compact complex manifold with $\dim X=\dim Y$. Suppose that the Picard numbers $\rho(X)=\rho(Y)$. If $f:X\rightarrow Y$ is a holomorphic map of degree one, then $f$ is a biholomorphic map.
\end{theorem}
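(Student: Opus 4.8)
The plan is to run exactly the argument used to prove Theorem~\ref{1hol}, with the second Betti number replaced by the Picard number and Proposition~\ref{thm Fuj} replaced by Proposition~\ref{NSFuj}.

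First I would note that the first half of the proof of Theorem~\ref{1hol} --- the part showing that a degree-one holomorphic map $f\colon X\to Y$ between connected compact complex manifolds is a modification --- uses only compactness and connectedness, and no hypothesis on Betti numbers, Picard numbers, or projectivity. So I would simply invoke it: setting $S=\{x\in X\mid J_{\mathbb C}(f)(x)=0\}$ and $S'=f(S)$, Remark~\ref{rem1} gives that $f$ is surjective with $\textup{sgn}(f)\equiv 1$ on regular fibres; the condition $\deg f=1$ then forces $f^{-1}(y)$ to consist of a single point for every regular value $y$, so $f\colon X-f^{-1}(S')\rightarrow Y-S'$ is biholomorphic; and $S$, $S'=f(S)$, $f^{-1}(S')$ are nowhere dense analytic subsets (by \cite{U}, Corollary~1.7, the proper mapping theorem together with the dimension estimate, and connectedness of $X$, respectively). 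Hence $f$ is a modification; let $E\subseteq X$ be its exceptional set.

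Next I would argue by contradiction. Suppose $E\neq\emptyset$, and let $E_1,\dots,E_r$ with $r\geq 1$ be its irreducible components. Since $X$ is projective, Proposition~\ref{NSFuj} applies and yields the exact sequence
$0\to\oplus_{j=1}^r\mathbb R[E_j]\to NS(X)_{\mathbb R}\to NS(Y)_{\mathbb R}\to 0$,
so that $\rho(X)=\rho(Y)+r>\rho(Y)$. This contradicts the hypothesis $\rho(X)=\rho(Y)$, whence $E=\emptyset$; by Theorem~\ref{mod}, or directly from the definition of the exceptional set, this means $f\colon X\to Y$ is biholomorphic.

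The only input beyond the proof of Theorem~\ref{1hol} is Proposition~\ref{NSFuj}, and the point there that is more than a formal consequence of Proposition~\ref{thm Fuj} is that the classes $[E_j]$ remain linearly independent inside the \emph{smaller} group $NS(X)_{\mathbb R}$ while $f_*$ still surjects onto $NS(Y)_{\mathbb R}$; this is exactly where the projectivity of $X$ enters, via the correspondence between line bundles and divisors and the projection formula $f_*f^*=\textup{id}$ on Néron--Severi groups. Since Proposition~\ref{NSFuj} is already available, no real obstacle remains and the argument above is immediate.
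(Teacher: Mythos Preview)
Your proof is correct and follows exactly the paper's own argument: first reuse the modification step from the proof of Theorem~\ref{1hol}, then apply Proposition~\ref{NSFuj} to conclude that the exceptional set has $\rho(X)-\rho(Y)=0$ irreducible components. The paper's proof is simply the terse two-line version of what you wrote.
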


\begin{proof}
As the proof of Theorem \ref{1hol}, we know that $f$ is a modification. Then by Proposition \ref{NSFuj}, the number of irreducible components of its exceptional set is zero. Hence, $f$ is a biholomorphic map.
\end{proof}

We will give an application  of Theorem \ref{1hol}.
G. Bharali and I. Biswas considered the rigidity of a holomorphic self-map of a fiber space in \cite{BB}, where Theorem 1.2 is the following  under the extra condition that $\dim H^1(X_s, \mathcal{O}_{X_s})$ is independent on $s\in S$.

\begin{theorem}[\cite{BB}]
 Suppose that $S$ is a connected compact complex manifold and $p:X\rightarrow S$ is a family of connected compact complex manifolds (i.e., $p$ is a proper holomorphic submersion with connected compact fibers). Let $F:X\rightarrow X$ be a holomorphic map such that there exist two points $a, b\in S$ satisfying $F(X_a)\subseteq X_b$. Then
\begin{enumerate}[leftmargin=*]
\item[\textup{(a)}] $F$ is a holomorphic map of fiber spaces, i.e., there exists a holomorphic map $f:S\rightarrow S$ such that $p\circ F=f\circ p$; and
\item[\textup{(b)}] If $F\mid_{X_a}:X_a\rightarrow X_b$ has degree one, then $F$ is a fiberwise biholomorphism.
\end{enumerate}
\end{theorem}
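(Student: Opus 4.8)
\medskip

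\noindent\textbf{Proof proposal.} The plan is to prove (a) by a rigidity argument adapted to the analytic submersion setting, and then to deduce (b) from Theorem~\ref{1hol}. For (a), set $q:=p\circ F:X\to S$ and consider
\[
T:=\{\,s\in S : q(X_s)\ \text{is a single point}\,\}.
\]
From $F(X_a)\subseteq X_b$ we get $q(X_a)=p(F(X_a))\subseteq p(X_b)=\{b\}$, so $a\in T$ and $T\neq\emptyset$. I would then show that $T$ is both closed and open; since $S$ is connected this forces $T=S$, and then $f(s):=q(X_s)$ is a well-defined map $S\to S$ with $p\circ F=f\circ p$, holomorphic because $p$, being a submersion, locally admits holomorphic sections $\sigma$ and $f=q\circ\sigma$ locally. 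Closedness of $T$ is routine: $X$ is compact (as $p$ is proper and $S$ is compact), so $q(X)$ is compact; if $s_n\to s$ with $q(X_{s_n})=\{z_n\}$, pass to a subsequence with $z_n\to z\in q(X)$, and for any $\xi\in X_s$ use a local holomorphic section of $p$ through $\xi$ to get $\xi_n\in X_{s_n}$ with $\xi_n\to\xi$, so $q(\xi)=\lim q(\xi_n)=\lim z_n=z$, whence $q(X_s)=\{z\}$. The only real work, and the step I expect to be the main obstacle, is the \emph{openness} of $T$ --- this is where the rigidity phenomenon is used.

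For the openness, suppose $q(X_s)=\{z\}$ and pick a coordinate chart $W\subseteq S$ around $z$ with coordinate functions $w_1,\dots,w_d$. Since $X_s\subseteq q^{-1}(W)$ and $p$ is proper, the image $p\bigl(X\setminus q^{-1}(W)\bigr)$ is a closed subset of $S$ not containing $s$, so $p^{-1}(U)\subseteq q^{-1}(W)$ for some connected neighbourhood $U$ of $s$. Then each $q_j:=w_j\circ q$ is a genuine $\mathbb{C}$-valued holomorphic function on $p^{-1}(U)$, and for every $s'\in U$ its restriction to the compact connected fibre $X_{s'}$ is constant by the maximum principle; hence $q(X_{s'})$ is a single point and $U\subseteq T$. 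This completes (a); I note that, in contrast to the original argument of Bharali--Biswas, this uses no hypothesis on the behaviour of $\dim H^1(X_s,\mathcal{O}_{X_s})$.

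For (b), recall that $p\circ F=f\circ p$ forces $F(X_s)\subseteq X_{f(s)}$ for all $s$, with $f(a)=b$. The key point is that $s\mapsto\deg\bigl(F|_{X_s}:X_s\to X_{f(s)}\bigr)$ is locally constant: a proper holomorphic submersion is, by Ehresmann's fibration theorem, a locally trivial $C^\infty$ fibre bundle, so over a sufficiently small connected $U\ni s$ the map $F$, after trivializing $p$ near $s$ and near $f(s)$, becomes a continuous family $u\mapsto F_u$ of maps between the fixed manifolds $X_s$ and $X_{f(s)}$, and homotopic maps of closed oriented manifolds have equal degree. Since $S$ is connected and $F|_{X_a}$ has degree one, $\deg(F|_{X_s})=1$ for every $s$. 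Finally, $X_s$ and $X_{f(s)}$ are both fibres of $p$ over the connected base $S$, hence diffeomorphic (again by Ehresmann), so they have the same dimension and the same second Betti number; Theorem~\ref{1hol} then shows each $F|_{X_s}:X_s\to X_{f(s)}$ is biholomorphic, i.e.\ $F$ is a fibrewise biholomorphism.
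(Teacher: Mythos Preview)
Your proposal is correct and follows the same approach as the paper: for (b) you argue, exactly as the paper does, that the fiberwise degree is constant, that all fibers are diffeomorphic (Ehresmann) and hence have equal $b_2$, and then apply Theorem~\ref{1hol}. The only difference is that the paper outsources part (a) and the constancy of the fiberwise degree to \cite{BB}, whereas you supply self-contained arguments (maximum principle for (a), Ehresmann trivialization plus homotopy invariance for the degree); your aside about the $H^1$ hypothesis is slightly off, since the paper indicates that \cite{BB}'s proof of (a) already did not require it.
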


\begin{proof}
Part (a) is same as Theorem 1.2, a) in \cite{BB}.  From the original proof of Theorem 1.2, b) in \cite{BB}, we know that  if $F\mid_{X_a}:X_a\rightarrow X_b$ has degree one, then for any $s\in S$, $F\mid_{X_s}:X_s\rightarrow X_{f(s)}$ has also degree one. Since all fibers of $p:X\rightarrow S$ are diffeomorphic, $b_2(X_s)= b_2(X_{f(s)})$. Hence, by Theorem \ref{1hol}, we get part (b).
\end{proof}

\section{holomorphic maps with positive degree}
In this section we consider surjective holomorphic maps. A holomorphic map $f:X\rightarrow Y$ of complex manifolds is called \emph{finite}, if $f$ is proper and for any point $y\in Y$, $f^{-1}(y)$ is a finite set.

\begin{proposition}\label {thm Ksf}
Let $X$ be a compact K\"ahler  manifold and $Y$ a compact complex manifold. Suppose that the Betti numbers of $X$ and $Y$ are equal. If $f:X\rightarrow Y$ is a surjective holomorphic map, then $f$ is finite. Therefore, $\dim X=\dim Y$ and $Y$ is a K\"ahler manifold.
\end{proposition}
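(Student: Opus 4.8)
The plan is to leverage the equality of Betti numbers to force $f^{*}$ to be an isomorphism on cohomology, read off the equidimensionality from this, and then exclude positive-dimensional fibres. Write $n=\dim X$ and $m=\dim Y$; surjectivity of $f$ gives $n\ge m$, and by generic smoothness a generic fibre $F$ of $f$ is a smooth complex submanifold of $X$ of dimension $d:=n-m$.

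The first step is to show $f^{*}\colon H^{k}(Y,\mathbb R)\to H^{k}(X,\mathbb R)$ is injective for every $k$. Fix a Kähler form $\omega$ on $X$ and let $f_{*}\colon H^{k}(X,\mathbb R)\to H^{k-2d}(Y,\mathbb R)$ be the Gysin map, obtained through Poincar\'e duality from the homological push-forward (both $X$ and $Y$ are compact oriented manifolds). The projection formula yields
\[
f_{*}\bigl(f^{*}\alpha\cup[\omega]^{d}\bigr)=\alpha\cup f_{*}\bigl([\omega]^{d}\bigr)\qquad(\alpha\in H^{k}(Y,\mathbb R)),
\]
and $f_{*}([\omega]^{d})\in H^{0}(Y,\mathbb R)\cong\mathbb R$ equals $\int_{F}\omega^{d}$, which is strictly positive because $\omega|_{F}$ is a Kähler form on the $d$-dimensional manifold $F$. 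Hence $f^{*}\alpha=0$ forces $\alpha=0$. Since $b_{k}(X)=b_{k}(Y)$ for all $k$, it follows that $f^{*}$ is an isomorphism in every degree. In particular $H^{2n}(X,\mathbb R)\cong H^{2n}(Y,\mathbb R)$; but the latter vanishes as soon as $2n>2m=\dim_{\mathbb R}Y$, while $H^{2n}(X,\mathbb R)\ne 0$. Therefore $n=m$, i.e.\ $\dim X=\dim Y$, and by Remark \ref{rem1} we have $\deg f>0$.

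Next I would prove $f$ is finite. Suppose some fibre $f^{-1}(y_{0})$ has a component of positive dimension; then it contains an irreducible compact curve $C$, and $\int_{C}\omega>0$ gives $[C]\ne 0$ in $H_{2}(X,\mathbb R)$, whereas $f_{*}[C]=0$ since $f(C)$ is a point. On the other hand, now $d=0$ and the projection formula gives $f_{*}f^{*}=\deg f\cdot\mathrm{id}$ on $H^{*}(Y,\mathbb R)$, so the Gysin map $f_{*}\colon H^{2n-2}(X,\mathbb R)\to H^{2n-2}(Y,\mathbb R)$ is surjective; dualizing by Poincar\'e duality, $f_{*}\colon H_{2}(X,\mathbb R)\to H_{2}(Y,\mathbb R)$ is surjective, hence an isomorphism because $b_{2}(X)=b_{2}(Y)$. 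This contradicts $[C]\ne 0$, $f_{*}[C]=0$. So every fibre of $f$ is finite, and since $X$ is compact $f$ is proper, hence finite. Finally, $X$ being Kähler and $f$ a finite surjective holomorphic map, $Y$ is Kähler by a theorem of Varouchas on the stability of the Kähler condition under finite surjective holomorphic maps.

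I expect the first step to be the crux: making the Gysin map precise and, above all, establishing $f_{*}([\omega]^{d})=\int_{F}\omega^{d}\ne 0$, for this is exactly where the Kähler hypothesis is indispensable — without positivity of $\omega$ one cannot rule out $f_{*}([\omega]^{d})=0$, and the whole dimension comparison collapses. The closing assertion that $Y$ is Kähler is the other delicate point, since it rests on Varouchas's theorem rather than on an elementary argument; the verification that a positive-dimensional analytic subset of the Kähler manifold $X$ carries a nonzero homology class, and the bookkeeping between Poincar\'e duals, are routine by comparison.
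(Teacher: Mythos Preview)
Your overall strategy matches the paper's: show $f^{*}$ is injective on cohomology using the K\"ahler class (the paper quotes this as \cite{V}, Lemma~7.28, rather than reproving it), upgrade to an isomorphism via the Betti-number hypothesis, rule out positive-dimensional fibres by a push-forward argument, and conclude with Varouchas.

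There is, however, one genuine gap in the finiteness step. You assert that a fibre with a positive-dimensional component ``contains an irreducible compact curve $C$''. A positive-dimensional compact analytic subset of a K\"ahler manifold need not contain any curve: a generic complex torus of dimension $\ge 2$ has no positive-dimensional proper analytic subvarieties whatsoever, and nothing prevents such a torus from appearing inside a fibre. (The curve argument \emph{does} work when $X$ is projective, and that is exactly how the paper handles the projective analogue in Proposition~\ref{thm psf1}.) The fix is immediate and is what the paper does here: take any irreducible component $Z$ of the fibre, say of dimension $r\ge 1$, and run your argument in degree $2r$ instead of degree $2$. Since you have already shown $f^{*}$ (hence, by Poincar\'e duality, $f_{*}$) is an isomorphism in every degree, nothing else changes: $[Z]\ne 0$ in $H_{2r}(X,\mathbb R)$ because $\int_{Z}\omega^{r}>0$, while $f_{*}[Z]=0$, contradiction.

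One minor structural difference: you first deduce $n=m$ from the top-degree isomorphism and only then argue finiteness, whereas the paper proves finiteness directly (allowing $n\ne m$ in the Gysin map) and reads off $n=m$ afterward. Both orderings are fine; your assessment that the injectivity of $f^{*}$ is the crux is a bit pessimistic, since that step is standard enough to be a citation.
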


\begin{proof}
 If $f:X\rightarrow Y$ is finite, then $\dim X=\dim Y$ and $Y$ is a K\"ahler manifold by Theorem 2 in \cite{Var}. So, we only need to prove that $f$ is finite. Assume that $f$ is not finite. Then there exists a point $y\in Y$ such that $\dim f^{-1}(y)\geq 1$. We choose an irreducible analytic set $Z\subseteq f^{-1}(y)$ such that $\dim Z=r\geq 1$. If we set $\dim X=n$ and $\dim Y=m$, since $f(Z)=\{y\}$, then $f_*([Z])=0$ in $H^{2m-2r}(Y, \mathbb{R})$. Here $[Z]\in H_{2r}(X, \mathbb{R})$ is the fundamental class of $Z$ on $X$, and $f_*:H^{2n-2r}(X, \mathbb{R})\rightarrow H^{2m-2r}(Y, \mathbb{R})$ is the $2r$-th Gysin map.

Since $X$ is a compact K\"ahler manifold, $[Z]\neq 0$ in $H^{2n-2r}(X, \mathbb{R})$. Hence we can choose a class $\gamma\in H^{2r}(X, \mathbb{R})$ such that $[Z]\cup\gamma\neq 0$ in $H^{2n}(X, \mathbb{R})$. By Lemma 7.28 in  \cite{V}, $f^*:H^{2r}(Y, \mathbb{R})\rightarrow H^{2r}(X, \mathbb{R})$ is injective. Since $b_{2r}(X)=b_{2r}(Y)$, $f^*$ is bijective. Hence, there is $\beta\in H^{2r}(Y, \mathbb{R})$ such that $\gamma=f^*(\beta)$. So $[Z]\cup\gamma=[Z]\cup f^*(\beta)=f_*[Z]\cup\beta=0$. It contradicts  the choice of $\gamma$.
\end{proof}

Consequently, we have the following.

\begin{corollary}
Let $X$ be a compact K\"ahler  manifold. If $f:X\rightarrow X$ is a surjective holomorphic map, then $f$ is finite.
\end{corollary}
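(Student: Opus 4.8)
The plan is simply to specialize Proposition \ref{thm Ksf} to the case $Y = X$. Since the target coincides with the source, the hypothesis that $X$ and $Y$ have the same Betti numbers is vacuously satisfied, and $f$ is surjective by assumption; the proposition then yields directly that $f$ is finite. Strictly speaking there is nothing more to do, and the assertion in Proposition \ref{thm Ksf} that the target is K\"ahler is of course automatic here. So the real work has already been done; what follows is just an unpacking of the mechanism in the self-map setting.

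Concretely, one argues by contradiction: if $f$ were not finite, then, $X$ being compact, $f$ is proper, so some fiber $f^{-1}(y)$ has an irreducible component $Z$ with $r := \dim Z \geq 1$. Set $n := \dim X$. Because $f(Z) = \{y\}$ is a point, the $2r$-th Gysin map $f_* : H^{2n-2r}(X, \mathbb{R}) \rightarrow H^{2n-2r}(X, \mathbb{R})$ kills the fundamental class $[Z]$. On the other hand, since $X$ is compact K\"ahler, $[Z] \neq 0$ in $H^{2n-2r}(X, \mathbb{R})$, so by Poincar\'e duality we may choose $\gamma \in H^{2r}(X, \mathbb{R})$ with $[Z] \cup \gamma \neq 0$ in $H^{2n}(X, \mathbb{R})$.

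The crux — exactly as in the proof of Proposition \ref{thm Ksf} — is that $f^* : H^{2r}(X, \mathbb{R}) \rightarrow H^{2r}(X, \mathbb{R})$ is injective for a surjective holomorphic map between compact K\"ahler manifolds (Lemma 7.28 in \cite{V}); as the two Betti numbers $b_{2r}$ coincide trivially, $f^*$ is in fact an isomorphism, so $\gamma = f^*\beta$ for some $\beta \in H^{2r}(X, \mathbb{R})$. The projection formula then gives $[Z] \cup \gamma = [Z] \cup f^*\beta = f_*[Z] \cup \beta = 0$, contradicting the choice of $\gamma$. Thus the only conceptual obstacle, the injectivity of $f^*$ on even-degree cohomology, is already contained in the proposition being invoked, and in the self-map case the corollary is immediate.
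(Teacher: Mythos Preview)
Your proposal is correct and matches the paper's approach exactly: the corollary is stated in the paper without a separate proof, as it follows immediately from Proposition \ref{thm Ksf} by taking $Y=X$, which makes the equal-Betti-number hypothesis automatic. Your additional unpacking of the argument is accurate and faithfully reproduces the mechanism of that proposition in the self-map setting.
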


The following proposition is essentially proved by A. Fujimoto in \cite{Fujimoto}, where $X$ and $Y$ are both projective manifolds with $\dim X=\dim Y$ and  $\rho(X)=\rho(Y)$.

\begin{proposition}\label {thm psf1}
Let $X$ be a projective manifold and $Y$ a compact complex manifold such that  $\rho(X)=\rho(Y)$ or $b_2(X)=b_2(Y)$. If $f:X\rightarrow Y$ is a surjective holomorphic map, then $f$ is finite and  $\dim X=\dim Y$.
\end{proposition}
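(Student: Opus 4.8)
The plan is to run the argument of Proposition \ref{thm Ksf}, but to replace the appeal to Hodge theory on the K\"ahler manifold $X$ by the elementary positivity of an ample line bundle; this is exactly what lets us get by with equality of the \emph{second} Betti numbers, or of the Picard numbers, in place of equality of all Betti numbers. First I would note that it suffices to prove that $f$ is finite: $X$ is compact, so $f$ is proper; if in addition every fiber is $0$-dimensional then, being a compact analytic set, every fiber is finite, hence $f$ is finite; and for a finite surjective holomorphic map one has $\dim X=\dim Y$, since always $\dim X\ge\dim Y$ while finiteness of the fibers forces $\dim X\le\dim Y$ by the theorem on the dimension of fibers.

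Assume $f$ is not finite. Then some fiber $f^{-1}(y)$ has a positive-dimensional irreducible component $Z$, with $r:=\dim Z\ge1$ and $f(Z)=\{y\}$. Fix an ample line bundle $L$ on $X$ and set $h:=c_1(L)\in NS(X)_{\mathbb{R}}\subseteq H^2(X,\mathbb{R})$. Since $L|_Z$ is ample on the $r$-dimensional projective variety $Z$, its top self-intersection number is positive, so --- writing $[Z]\in H_{2r}(X,\mathbb{R})$ for the fundamental class of $Z$ pushed forward to $X$ ---
\[
\langle [Z],\, h^{r}\rangle=(L|_Z)^{r}>0 .
\]
On the other hand, because $f$ contracts $Z$ to a point and $r\ge1$, the pushforward $f_{*}[Z]$ vanishes in $H_{2r}(Y,\mathbb{R})$.

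The key step is that $h$ descends along $f$. As in the proof of Proposition \ref{thm Ksf}, $f^{*}\colon H^{2}(Y,\mathbb{R})\to H^{2}(X,\mathbb{R})$ is injective because $X$ is projective, hence K\"ahler, and $f$ is surjective (Lemma 7.28 in \cite{V}). If $b_{2}(X)=b_{2}(Y)$, then this $f^{*}$ is an isomorphism, so $h=f^{*}h'$ for some $h'\in H^{2}(Y,\mathbb{R})$. If instead $\rho(X)=\rho(Y)$, then $f^{*}$ maps $NS(Y)_{\mathbb{R}}$ into $NS(X)_{\mathbb{R}}$ (the pullback of the first Chern class of a line bundle is the first Chern class of its pullback) and, being injective there, is an isomorphism $NS(Y)_{\mathbb{R}}\to NS(X)_{\mathbb{R}}$; since $h\in NS(X)_{\mathbb{R}}$, again $h=f^{*}h'$ with $h'\in NS(Y)_{\mathbb{R}}$. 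In either case $h^{r}=f^{*}\big((h')^{r}\big)$, because $f^{*}$ is a ring homomorphism, so the projection formula $\langle f_{*}\alpha,\beta\rangle=\langle\alpha,f^{*}\beta\rangle$ yields
\[
0<\langle [Z],\,h^{r}\rangle=\big\langle [Z],\, f^{*}\big((h')^{r}\big)\big\rangle=\big\langle f_{*}[Z],\,(h')^{r}\big\rangle=0,
\]
a contradiction. Hence $f$ is finite, and therefore $\dim X=\dim Y$.

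I do not expect a serious obstacle; the argument is a mild sharpening of the one already given for Proposition \ref{thm Ksf}. The only place that requires a new idea is the implication $h\in\textup{Im}\,f^{*}$ from $b_{2}(X)=b_{2}(Y)$ alone (rather than from equality of all Betti numbers as in Proposition \ref{thm Ksf}): here it is essential that $h$ be chosen to be an \emph{ample} class, so that it automatically descends along $f^{*}$ once $f^{*}$ is surjective on $H^{2}$, instead of working with an arbitrary class of $H^{2r}(X)$ as a naive transcription of the earlier proof would. One should also record the minor technical points that $Z$ can be taken as a genuine analytic subvariety of the fiber carrying a fundamental class, and that $(L|_Z)^{r}>0$ even when $Z$ is singular (the restriction of an ample bundle is ample, and the top self-intersection of an ample class on a projective variety of positive dimension is positive).
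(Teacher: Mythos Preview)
Your argument is correct and essentially the same as the paper's. The only cosmetic difference is that the paper first passes from $Z$ to a projective curve $C\subseteq f^{-1}(y)$ (which exists since $f^{-1}(y)$ is a positive-dimensional projective variety), so that one pairs $[C]$ against a single class in $H^{2}$---namely $c_{1}(L)$ in the $\rho(X)=\rho(Y)$ case, or an abstract $\gamma$ as in the proof of Proposition~\ref{thm Ksf} with $Z=C$ in the $b_{2}$ case---rather than against the power $h^{r}$; your unified treatment via $h^{r}$ works equally well.
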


\begin{proof}
Obviously, we only need to prove that $f$ is finite. Assume that $f$ is not finite. Then there exists a point $y\in Y$ such that $\dim f^{-1}(y)\geq 1$. Since $X$ is a projective manifold, $f^{-1}(y)$ is a projective variety, hence contains a projective curve $C$. Suppose $n=\dim X\geq \dim Y=m$. Since $f(C)=\{y\}$, $f_*[C]=0$ in $H^{2m-2}(Y,\mathbb{R})$, where $f_*:H^{2n-2}(X,\mathbb{R})\rightarrow H^{2m-2}(Y,\mathbb{R})$ is the second Gysin map. The $r$-th Gysin map is defined by pushing out through the Poincar\'e dualities as follows (see \cite{V}, page 178):
\begin{equation*}
\xymatrix{
H^{2n-r}(X, \mathbb{R}) \ar[d]_{PD_X} \ar[r]^-{f_*}  & H^{2m-r}(Y, \mathbb{R}) \ar[d]^{PD_Y}  \\
  H_r(X, \mathbb{R}) \ar[r]_{f_*}  & H_r(Y, \mathbb{R})}.
\end{equation*}

By \cite{V}, Lemma 7.28,  $f^*:H^2(Y, \mathbb{R})\rightarrow H^2(X, \mathbb{R})$ is injective. When $\rho(X)=\rho(Y)$, $f^*:NS(Y)_{\mathbb{R}}\rightarrow NS(X)_{\mathbb{R}}$ is an isomorphism. Let $L$ be an ample line bundle on $X$. Then there is an $\alpha\in NS(Y)_{\mathbb{R}}$ such that $c_1(L)=f^*\alpha$. So $c_1(L)\cup [C]= f^*\alpha\cup [C]=\alpha\cup f_*[C]=0$. It contradicts the ampleness of $L$. When $b_2(X)=b_2(Y)$, we can obtain the contradiction  as in the proof of Proposition \ref{thm Ksf} when we choose $Z=C$ here.
\end{proof}

Now, we consider when a surjective holomorphic map is an unramified covering map. A surjective holomorphic  map is called \emph{an unramified covering map} if it is a finite covering map in the topological sense.

\begin{proposition}\label{un-cov}
\begin{enumerate}[leftmargin=*]
 \item[\textup{(a)}]
 Let $X$ be a compact complex manifold with non-negative Kodaira dimension. If $f:X\rightarrow X$ is a surjective holomorphic map, then $f$ is an unramified covering map.
\item[\textup{(b)}]
Let $X$ and $Y$ be $n$-dimensional  compact complex manifolds with $K_X^{\otimes k}=\mathcal{O}_X$ and $K_Y^{\otimes l}=\mathcal{O}_Y$ for some positive integers $k$ and $l$ respectively. If $f:X\rightarrow Y$ is a surjective holomorphic map, then $f$ is an unramified covering map.
\item[\textup{(c)}]
Let $f:X\rightarrow Y$ be a surjective finite map of complex manifolds (which may be noncompact). If the number $| f^{-1}(y)|$ of the points contained in $f^{-1}(y)$ is independent with $y\in Y$, then $f$ is an unramified covering map.
\end{enumerate}
\end{proposition}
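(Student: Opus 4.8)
\emph{Proof idea.} The plan is to prove part (c) first by an elementary local argument, and then to deduce parts (a) and (b) from it by showing in each case that the ramification divisor of $f$ is empty. Throughout I will freely use the standard fact that a proper local homeomorphism of (locally nice) spaces is a covering map, and the standard local theory of finite holomorphic maps.

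For part (c): since $f$ is finite and surjective, $\dim X=\dim Y=:n$ (a proper map with finite fibres cannot raise dimension, and by the proper mapping theorem its image is analytic, so surjectivity rules out lowering it). For a finite holomorphic map of $n$-dimensional complex manifolds every $x\in X$ has a local mapping multiplicity $\mu_x\geq 1$, the function $d(y):=\sum_{x\in f^{-1}(y)}\mu_x$ is locally constant on $Y$, and $|f^{-1}(y)|\leq d(y)$ with equality precisely when $f$ is unramified over $y$ (i.e. $\mu_x=1$ for all $x\in f^{-1}(y)$). The ramification locus is a proper analytic subset (a finite holomorphic map is open, so $df$ cannot be everywhere degenerate on a component), so regular values are dense; on a connected component of $Y$ the constant value of $d$ therefore equals $|f^{-1}(y)|$ at a regular value, which by hypothesis is the global constant $c$. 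Hence $|f^{-1}(y)|=d(y)$ for every $y$, so $f$ is unramified everywhere, $df_x$ is an isomorphism at each $x$, and $f$ is a local biholomorphism; being also proper, $f$ is an unramified covering map.

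For parts (a) and (b) I use a common reduction. In each case $f$ is a surjective map of compact complex manifolds of equal dimension $n$ (with $Y=X$ in (a)), hence generically finite, so pullback of holomorphic $n$-forms is a nonzero bundle map $f^{\ast}K_Y\to K_X$, i.e. a nonzero section of $K_X\otimes f^{\ast}K_Y^{-1}$, whose zero divisor is the effective \emph{ramification divisor} $R$. If $R=0$, then this section vanishes nowhere, so $df_x$ is an isomorphism at every $x$ and $f$ is a local biholomorphism; since $X$ is compact, $f$ is proper, and the fibre cardinality of a proper local homeomorphism is locally constant, hence constant on each component of $Y$, so part (c) shows $f$ is an unramified covering. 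Thus everything reduces to proving $R=0$. In case (b), from $K_X^{\otimes k}\cong\mathcal O_X$ and $K_Y^{\otimes l}\cong\mathcal O_Y$ we get $\mathcal O_X(klR)\cong K_X^{\otimes kl}\otimes f^{\ast}(K_Y^{\otimes kl})^{-1}\cong\mathcal O_X$; but $klR$ is effective, and an effective divisor whose associated line bundle is trivial must be zero (its canonical section is then, up to a scalar, a nowhere-vanishing holomorphic function). Hence $R=0$.

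In case (a), choose $m>0$ with $h^0(X,K_X^{\otimes m})\geq 1$ (possible since $\kappa(X)\geq 0$). Since $f$ is dominant, $f^{\ast}$ — and likewise $(f^{n})^{\ast}$ for every $n$ — acts injectively, hence bijectively, on the finite-dimensional space $H^0(X,K_X^{\otimes m})$. Writing pullbacks of pluricanonical forms in local coordinates shows that for $0\neq\eta=(f^{n})^{\ast}\omega$ one has $\operatorname{div}(\eta)=(f^{n})^{\ast}\operatorname{div}(\omega)+mR_n\geq mR_n$, where $R_n=\sum_{j=0}^{n-1}(f^{j})^{\ast}R$ is the ramification divisor of $f^{n}$ (chain rule). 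Fix one such nonzero $\eta$; then the effective divisors $\sum_{j=0}^{n-1}(f^{j})^{\ast}R$ are all bounded above by the fixed divisor $\operatorname{div}(\eta)/m$, which has only finitely many irreducible components. If $R\neq 0$, each $(f^{j})^{\ast}R$ is nonzero and effective (pullback of a nonzero effective divisor under a dominant map), so these partial sums strictly increase with $n$ while staying bounded above by a fixed divisor — a contradiction. Hence $R=0$, and the common reduction finishes part (a). The one genuinely delicate point is exactly this: non-negativity of the Kodaira dimension by itself only says that $mR$ lies in the fixed part of $|mK_X|$ for a single $m$, and it is the passage to the iterates $f^{n}$, together with the fact that $f^{\ast}$ is an \emph{automorphism} of each $H^0(K_X^{\otimes m})$, that upgrades this to the vanishing of $R$; parts (b) and (c) are comparatively routine.
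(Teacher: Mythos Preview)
Your proposal is correct, and for part (b) it is essentially identical to the paper's argument (the paper phrases the section as $\bigwedge^n f_*$ rather than pullback of $n$-forms, but the content is the same, and the conclusion via ``proper local biholomorphism $\Rightarrow$ covering'' is the same). The differences lie in parts (c) and (a). For (c) the paper gives a slightly more bare-hands argument: it fixes $y_0$ with $f^{-1}(y_0)=\{x_1,\dots,x_d\}$, chooses disjoint neighbourhoods $W_i$ of the $x_i$, uses openness of $f$ to set $V=\bigcap_i f(W_i)$ and $U_i=W_i\cap f^{-1}(V)$, and then the counting hypothesis $|f^{-1}(y)|\equiv d$ forces each $f|_{U_i}:U_i\to V$ to be bijective and $f^{-1}(V)=\bigcup_i U_i$, exhibiting $V$ as evenly covered. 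Your route via local multiplicities and constancy of $d(y)=\sum_{x\in f^{-1}(y)}\mu_x$ is equally valid but invokes a bit more of the standard theory of finite maps; the paper's version has the advantage of being completely elementary. For (a) the paper does not give a proof at all---it simply cites Kobayashi's book and Peters' paper---whereas you actually supply the classical argument (bijectivity of $(f^n)^*$ on $H^0(K_X^{\otimes m})$ and the bound $mR_n\le\operatorname{div}(\eta)$ forcing $R=0$). That argument is correct and is essentially the one in those references, so in this respect your write-up is more self-contained than the paper's.
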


\begin{proof}
(a) is proved in \cite{K}, Theorem 7.6.11, or in \cite{P}.

(b) Let $f_*:T_X\rightarrow f^*T_Y$ be the tangent map. Then
$$\bigwedge^n f_*:\bigwedge^nT_X\rightarrow f^*\bigl(\bigwedge^nT_Y\bigr)$$
 defines a global section $\sigma\in \Gamma(X, K_X\otimes f^*K_Y^{-1})$. If $D(\sigma)$ is the divisor defined by $\sigma$, then $K_X=f^*K_Y\otimes \mathcal{O}(D(\sigma))$. Since $K_X^{\otimes k}=\mathcal{O}_X$ and $K_Y^{\otimes l}=\mathcal{O}_Y$, $\mathcal{O}(D(\sigma^{kl}))=\mathcal{O}_X$. Hence, the divisor $D(\sigma^{kl})=0$, which implies the divisor $D(\sigma)=0$. Therefore $f$ is a holomorphic submersion map. Since $\dim X=\dim Y$,  $f$ is a surjective local isomorphism. By \cite{Ho}, Lemma 2, $f$ is an unramified covering map.

(c) For any $y_0\in Y$, set $f^{-1}(y_0)=\{x_1,\dots, x_d\}$. By \cite{Fis}, Theorem in page 145, $f$ is an open map. So we can choose an open neighbourhood $W_i$ of $x_i$ for  $i=1,\dots, d$ such that $W_i\cap W_j=\emptyset$ for $i\neq j$. Define $H_i:=f(W_i)$ for $i=1,\dots, d$. Let $V:=\bigcap_{i=1}^dH_i$ and $U_i:=W_i\cap f^{-1}(V)$. Then, $f\mid_{U_i}:U_i\rightarrow V$ for $i=1,\dots, d$ is surjective. For any $y\in V$, $(f\mid_{U_i})^{-1}(y)$ is not empty and
\begin{displaymath}
\sum_{i=1}^d | (f\mid_{U_i})^{-1}(y)| \leq | f^{-1}(y)|=d.
\end{displaymath}
So for each $i$, $|(f\mid_{U_i})^{-1}(y)|=1$ and $f^{-1}(y)= \bigcup_{i=1}^d(f\mid_{U_i})^{-1}(y)$, i.e., $f\mid_{U_i}$ is bijective and $f^{-1}(V)= \bigcup_{i=1}^dU_i$. Hence, $f\mid_{U_i}$ is biholomorphic.  We have proved that $f$ is an unramified covering map.
\end{proof}

\begin{proposition}\label{k-isom}
Let $X$ be an $n$-dimensional connected compact complex manifold and  $f:X\rightarrow X$ an unramified holomorphic covering map. Suppose that $X$ satisfies one of the following conditions:
\begin{enumerate}[leftmargin=*]
\item[\textup{(a)}] There exists a Chern number $P(X):=\int_XP(c_1(X),\dots, c_n(X))\neq 0$, where
$$P(T_1,\dots,T_n)=\sum a_{i_1\cdots i_n}T_1^{i_1}\cdots T_n^{i_n}$$
is a polynomial on $T_1,\dots, T_n$ over $\mathbb{Q}$ satisfying $i_j\in \mathbb{N}$ for $j=1,\dots, n$ and $i_1+2i_2+\dots+ni_n=n$. Especially, the Euler characteristic $\chi(\mathcal{O}_X)\neq 0$ of $\mathcal{O}_X$, or the topological Euler characteristic $\chi^{top}(X)\neq 0$ of $X$, or the signature $\sigma(X)\neq 0$ of $X$ when $n$ is even; or
\item[\textup{(b)}] The fundamental group $\pi_1(X)$ has no proper subgroup isomorphic to itself.
\end{enumerate}
\noindent Then $f$ is biholomorphic.
\end{proposition}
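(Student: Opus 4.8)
\textbf{Proof proposal for Proposition \ref{k-isom}.}

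The plan is to exploit the fact that an unramified holomorphic covering map $f:X\rightarrow X$ of a connected compact complex manifold has a well-defined finite degree $d$, namely the number of sheets, and to show that the hypotheses (a) or (b) force $d=1$; a degree-one covering map between connected spaces is a homeomorphism, hence, being holomorphic with holomorphic local inverses, a biholomorphism. So everything reduces to proving $d=1$.

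For part (a), I would use multiplicativity of Chern numbers under unramified covers. Since $f$ is a local biholomorphism, $f^*c_i(X)=f^*c_i(T_X)=c_i(f^*T_X)=c_i(T_X)=c_i(X)$ for each $i$, because the tangent bundle pulls back to the tangent bundle. Therefore $f^*\big(P(c_1(X),\dots,c_n(X))\big)=P(c_1(X),\dots,c_n(X))$ in $H^{2n}(X,\mathbb{R})$. Now apply $\int_X$ and use the projection formula together with the fact that a $d$-sheeted covering satisfies $\int_X f^*\omega = d\int_X\omega$ for a top-degree form $\omega$ (equivalently $f_*f^* = d\cdot\mathrm{id}$ on $H^{2n}$). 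This yields $P(X)=d\cdot P(X)$, and since $P(X)\neq 0$ we get $d=1$. The listed special cases follow because $\chi(\mathcal{O}_X)$, $\chi^{top}(X)$, and (for $n$ even) $\sigma(X)$ are all such Chern numbers: $\chi(\mathcal{O}_X)$ via the Hirzebruch–Riemann–Roch expression of the Todd genus, $\chi^{top}(X)=\int_X c_n(X)$ by Gauss–Bonnet, and $\sigma(X)=\int_X L(X)$ by the Hirzebruch signature theorem. (Equivalently, one may argue directly: $\chi^{top}(X)=d\cdot\chi^{top}(X)$ since Euler characteristic is multiplicative under finite covers, etc.)

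For part (b), I would use the covering space correspondence. A connected $d$-sheeted covering $f:X\rightarrow X$ corresponds to a subgroup $f_*\pi_1(X)\leq \pi_1(X)$ of index $d$. Since $f$ is a covering of $X$ by itself, $f_*$ is injective and realizes $\pi_1(X)$ as a subgroup of itself of index $d$; if $d>1$ this is a \emph{proper} subgroup isomorphic to $\pi_1(X)$, contradicting the hypothesis. (One subtlety to address: the index $[\pi_1(X):f_*\pi_1(X)]$ equals the number of sheets only when $X$ is connected, which it is by assumption, and $f_*$ is injective because $f$ is a covering map; these are standard facts from covering space theory.) Hence $d=1$.

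I do not expect a genuine obstacle here; the argument is short once the right invariant is identified. The one point requiring a little care is the multiplicativity statement $\int_X f^*\omega = d\int_X\omega$ in part (a): this is where the hypothesis that $f$ is an unramified (honest topological) covering — rather than merely a surjective holomorphic map — is essential, since a ramified map would contribute Jacobian factors and the Chern classes would no longer pull back to themselves. I would simply invoke that $f$ has a well-defined degree equal to its number of sheets (Remark \ref{rem1} gives $\deg f>0$ in general, and for a covering $\deg f$ is exactly the sheet number) and that $f_*f^*=\deg f\cdot\mathrm{id}$ on top cohomology by the projection formula.
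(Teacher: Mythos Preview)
Your proposal is correct and follows essentially the same approach as the paper: multiplicativity of Chern numbers under an unramified cover for (a), and the identification of the sheet number with the index of $f_*\pi_1(X)$ in $\pi_1(X)$ for (b). The only cosmetic differences are that the paper phrases (b) via the long exact homotopy sequence of the covering (yielding $\pi_0(F)=0$ once $f_*$ is shown to be an isomorphism), and for (a) it invokes Theorem~\ref{1hol} to pass from $\deg f=1$ to biholomorphicity rather than your direct observation that a one-sheeted cover is automatically a biholomorphism.
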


\begin{proof}
(a) Since $f$ is an unramified covering map, $T_X=f^*T_X$ and for each $i$, $c_i(X)=f^*c_i(X)$. Hence, we have
\begin{equation*}
\begin{aligned}
P(X)=& \int_XP(f^*c_1(X),\dots,f^*c_n(X))\\
=& \int_Xf^*(P(c_1(X),\dots,c_n(X))) \\
=& \deg f\cdot \int_XP(c_1(X),\dots,c_n(X))\\
=& \deg f\cdot P(X).
\end{aligned}
\end{equation*}
Since $P(X)\neq 0$, $\deg f=1$. Then by Theorem \ref{1hol}, $f$ is biholomorphic.


(b) Let $x_0$ be any point in $X$ and $F=f^{-1}(x_0)$ the fiber of $f$ at $x_0$. Consider the exact sequence
\begin{displaymath}
\xymatrix{
\cdots\ar[r]&\pi_1(F)\ar[r]^{i_*} &\pi_1(X)\ar[r]^{f_*} &\pi_1(X)\ar[r]&\pi_0(F)\ar[r]& \pi_0(X)\ar[r]&\cdots}
\end{displaymath}
where $i_\ast$ is induced by the inclusion $i:F\to X$.  Clearly, $\pi_1(F)=0$. Since $X$ is a connected manifold, it is path-connected, hence $\pi_0(X)=0$. Since $\pi_1(X)$ does not contain any proper subgroup isomorphic to itself, $f_*$ is an isomorphism. Hence, $\pi_0(F)=0$, i.e., $F$ contains only one point. So $f$ is biholomorphic.
\end{proof}

\begin{remark}
 Y. Fujimoto in \cite{Fujimoto} proved Proposition \ref{k-isom}, (a) for $\chi(\mathcal{O}_X)\neq 0$ in the case of projective manifolds.
\end{remark}

Now we can give a proof of Theorem \ref{cy}.

\begin{proof}
By Proposition \ref{un-cov}, (b), $f$ is an unramified map. Then by Proposition \ref{k-isom}, (b), $f$ is a biholomorphic map.
\end{proof}

\noindent{\bf Acknowledgements.}\,
I would like to thank Prof. Jixiang Fu for his encouragement and many helpful discussions. I also would like to thank Prof. Gautam Bharali for telling me the newest results of himself and his co-authors. Finally, I would like to thank the referee's suggestions and careful reading of my manuscript.

\renewcommand{\refname}{\leftline{\textbf{References}}}

\makeatletter\renewcommand\@biblabel[1]{${#1}.$}\makeatother

\end{document}